\documentclass[12pt]{amsart}

\setlength{\topmargin}{-0.1in}
\setlength{\textheight}{8.5in}     
\setlength{\textwidth}{6.3in}      
\setlength{\oddsidemargin}{.120in}  %
\setlength{\evensidemargin}{.120in} %

\usepackage{amsmath}
\usepackage{amscd}
\usepackage{eucal}
\usepackage{amssymb}
\usepackage{epic}
\usepackage{graphics}
\usepackage{color}
\usepackage{mathrsfs}
\usepackage{multirow}
\usepackage{subfigure}
\usepackage{graphicx}
\usepackage{mathtools}

\numberwithin{equation}{section}
\numberwithin{table}{section}

\theoremstyle{plain}
\newtheorem{theorem}{Theorem}[section]

\theoremstyle{remark}

\newtheorem*{remarks}{Remarks}

\newcommand{\CC}{{\mathcal{C}}}

\newcommand{\Zp}{{\mathbb{Z}_{p}}}
\newcommand{\Qp}{{\mathbb{Q}_{p}}}

\newcommand{\ZZZ}{{\mathbb{Z}}}

\DeclareMathOperator{\im}{Im}
\DeclareMathOperator{\irred}{Irred}
\DeclareMathOperator{\diag}{diag}

\title[Abelian Cayley Graphs]{Integer Invariants of Abelian Cayley Graphs}
\author[Ducey]{Joshua E. Ducey} 
\author[Jalil]{Deelan M. Jalil}

\address{Dept.\ of Mathematics and Statistics, James Madison University, Harrisonburg, VA 22807}
\email{duceyje@jmu.edu}

\address{Dept.\ of Mathematics and Statistics, James Madison University, Harrisonburg, VA 22807}
\email{jalil2dm@jmu.edu}

\keywords{Cayley graphs, invariant factors, elementary divisors, Smith normal form, Smith group, critical group, association schemes,  adjacency matrix, Cartesian product of graphs, n-cube, Laplacian}
\subjclass[2010]{05C50}

\begin{document}
\begin{abstract}
Let $G$ be a finite abelian group, let $E$ be a subset of $G$, and form the Cayley (directed) graph of $G$ with connecting set $E$.  We explain how, for various matrices associated to this graph, the spectrum can be used to give information on the Smith normal form.  This technique is applied to several interesting examples, including matrices in the Bose-Mesner algebra of the Hamming association scheme $H(n,q)$.  We also recover results of Bai and Jacobson-Niedermaier-Reiner on the critical group of a Cartesian product of complete graphs.
\end{abstract}
\maketitle
\section{Introduction}
Throughout this paper $G$ will denote a finite abelian group (written multiplicatively) and $E$ will denote a subset of $G$.  We can then define a directed graph $\CC$ with vertex set $G$, and an edge from $h$ to $g$ if and only if $gh^{-1} \in E$.  We will refer to $\CC$ as the Cayley graph of $G$ with respect to the \textit{connecting set} $E$.  Note that $\CC$ will be an undirected graph precisely when $E = E^{-1}$, where $E^{-1} = \{e^{-1} \, | \, e \in E \}$.

The purpose of this paper is to apply results of MacWilliams-Mann \cite{M-M} and Sin \cite{sin} in order to obtain the Smith normal form of various matrices attached to many examples of these Cayley graphs.  The structure of the paper is as follows. In the second section we explain basic terminology related to graphs and the Smith normal form of an integer matrix.  In the third section we present and prove the results that motivate our computations.  In the final section we apply these results to numerous examples.
\section{Preliminaries}
Generally speaking, when studying a graph one technique is to encode the information into a matrix, and then study certain numerical and algebraic properties of this matrix.  Properties that remain the same up to isomorphism of graphs are called \textit{invariants}.  We now recall some popular matrices and invariants.

Ordering the vertices of a graph in some fixed (but arbitrary) manner, we can form the \textit{adjacency matrix}, $A$, of the graph:
\[
A(i,j) = 
  \begin{cases}
  1,  & \mbox{if there is an edge from vertex $i$ to vertex $j$}\\
  0, &  \mbox{otherwise,}
  \end{cases} 
  \]
where the symbol $A(i,j)$ denotes the entry of the matrix $A$ corresponding to row $i$ and column $j$. 

We can also consider the \textit{Laplacian matrix}, $L$, defined by
\[
L = D - A,
\]
where $D$ is the \textit{degree matrix}:
\[
D(i,j) = 
  \begin{cases}
  \mbox{the (out) degree of vertex i,} & \mbox{if $i = j$}\\
  0, &  \mbox{otherwise.}
  \end{cases} 
  \]
  
The most well-known invariant of each of these associated matrices is the \textit{spectrum}; that is, the eigenvalues and their multiplicities \cite{B-H}.  Even more fundamental is the \textit{Smith normal form}, which can be defined more generally for (possibly nonsquare) incidence matrices.  We say that two $m \times n$ integer matrices $M$ and $N$ are \textit{equivalent}, and write
\[
M \sim N,
\]
if there exist integer matrices $P$ and $Q$ with determinants $\pm 1$ so that
\[
PMQ = N.
\]
Such matrices $P$ and $Q$ are called \textit{unimodular}, and the condition on their determinants simply forces their inverses to also be integer matrices.

It is well-known that each integer matrix $M$ is equivalent to a matrix $S$ such that
\begin{enumerate}
\item $S(i,j) = 0$ if $i \neq j$, and
\item $S(i,i)$ divides $S(i+1, i+1)$ for $1 \leq i < \mbox{min}\{m,n\}$.
\end{enumerate}
The matrix $S$ is unique up to the sign of the $S(i,i)$ and is called the \textit{Smith normal form} of $M$.  The integers $S(i,i)$ are known as the \textit{invariant factors} of the matrix $M$, for reasons we now explain.  Viewing the matrix $M$ as defining a homomorphism of free abelian groups
\[
M \colon \ZZZ^{n} \to \ZZZ^{m},
\]
the cokernel $\ZZZ^{m} / \im(M)$ of this map has as its invariant factor decomposition \cite[Chap. 12, Theorem 5]{D-F} 
\[
\prod \ZZZ / S(i,i)\ZZZ.
\]
We can further decompose this cokernel into cyclic groups of prime power order--its elementary divisor decomposition--and for this reason the prime power factors of the invariant factors of $M$ are known as the \textit{elementary divisors} of $M$.  The purpose of going through this terminology is to stress that when one tries to find the Smith normal form of an integer matrix, one is really seeking a description of this cokernel, and this is a problem that can be solved one prime at a time.  

Returning now to graphs, we remark that when we are looking at the adjacency matrix this cokernel goes by the name of the \textit{Smith group} of the graph \cite{rushanan-combo}.  The torsion subgroup of the Laplacian cokernel has many names in the literature \cite{lorenzini-snf}, one of which is the \textit{critical group} of the graph.  The critical group of a graph is especially interesting because it has geometric and combinatorial interpretations:  the order of the critical group is the number of spanning forests in the graph, a connected graph is a tree if and only if its Laplacian is `unimodularly congruent' to its Smith normal form \cite{L-P-W-X}, etc.
\section{Eigenvalues as Character Sums}
We return to the situation described in the introduction.  Thus $G$ is a finite abelian group, $E$ is a subset of $G$, and $\CC$ is the corresponding Cayley graph.  Denote by $A_{E}$ the adjacency matrix of this graph.

In what follows we will require some basic familiarity with characters.  Most of what we need can be found in, for example,  \cite[Chaps. 2 and 3]{isaacs}. 
 The following is a well-known result expressing the eigenvalues of $A_{E}$ in terms of the irreducible complex characters of $G$, and was first observed in \cite{M-M}.  For completeness we provide a short proof.

\begin{theorem} \label{thm-main}
Let $\irred(G)$ denote the set of irreducible complex characters of $G$.  Let $M$ denote the character table of $G$, with rows indexed by $\irred(G)$ and columns indexed by $G$ in the same order as for $A_{E}$.  
Then
\begin{equation} \label{A}
\frac{1}{|G|} M A_{E}^{t} \overline{M}^{t} = \diag\left ( \sum_{e \in E} \chi(e)\right )_{\chi \in \irred(G)}.
\end{equation}
Thus the eigenvalues of $A_{E}$ take the form $\sum_{e \in E} \chi(e)$, for $\chi \in \irred(G)$.
\end{theorem}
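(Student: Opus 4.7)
The plan is to verify the matrix identity \eqref{A} by directly computing the $(\chi, \psi)$ entry of the left-hand side, and then to read off the eigenvalue statement from character orthogonality. Since $M$ has rows indexed by $\chi \in \irred(G)$ and columns indexed by $g \in G$ with $M(\chi, g) = \chi(g)$, I first unpack
\[
\bigl(M A_{E}^{t} \overline{M}^{t}\bigr)(\chi, \psi) = \sum_{h, g \in G} \chi(h)\, A_{E}(g, h)\, \overline{\psi(g)} = \sum_{h, g \in G} \chi(h)\, \mathbf{1}_{E}(hg^{-1})\, \overline{\psi(g)},
\]
using that $A_{E}(g,h) = 1$ iff $hg^{-1} \in E$.

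The key maneuver is the change of variable $e = hg^{-1}$, so that $h = eg$ and, for fixed $g$, $h$ ranges over $G$ while $e$ ranges over $G$. Restricting to the support $e \in E$ and using the fact that $\chi$ is a homomorphism gives $\chi(h) = \chi(e)\chi(g)$, which lets me factor the double sum:
\[
\sum_{h, g \in G} \chi(h)\, \mathbf{1}_{E}(hg^{-1})\, \overline{\psi(g)} = \Bigl(\sum_{e \in E} \chi(e)\Bigr) \Bigl(\sum_{g \in G} \chi(g)\overline{\psi(g)}\Bigr).
\]
Now the first orthogonality relation for characters of the abelian group $G$ says that the inner sum equals $|G|$ when $\chi = \psi$ and $0$ otherwise. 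Dividing by $|G|$ yields exactly the diagonal matrix claimed in \eqref{A}.

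Finally, for the eigenvalue assertion, the same orthogonality relation gives $\frac{1}{|G|} M \overline{M}^{t} = I$, so $\overline{M}^{t}$ is (up to the scalar $|G|$) the inverse of $M$. Therefore \eqref{A} can be rewritten as $M A_{E}^{t} M^{-1} = \diag(\sum_{e \in E} \chi(e))$, exhibiting the diagonal entries as the eigenvalues of $A_{E}^{t}$, which coincide with those of $A_{E}$. There is no real obstacle here: the only thing to be careful about is keeping track of which transpose/conjugate sits where, and making sure the change of variable $e = hg^{-1}$ is applied on the correct side so that $\chi(h)$ factors as $\chi(e)\chi(g)$ rather than $\chi(g)\chi(e)$ in a way that would mismatch the $\overline{\psi(g)}$ needed for orthogonality.
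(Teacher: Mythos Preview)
Your proof is correct and is essentially the same argument as the paper's: the paper computes $M A_{E}^{t}$ first and then multiplies by $\overline{M}^{t}$, while you expand the full product in one step, but the substitution $h = eg$, the factoring via multiplicativity of $\chi$, and the appeal to orthogonality (both for the diagonal form and for $\frac{1}{|G|}M\overline{M}^{t}=I$) are identical.
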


\begin{proof}
Observe that 
\begin{eqnarray*}
MA_{E}^{t}(\chi,g) &=& \sum_{\substack{h \in G\\ hg^{-1} \in E}} \chi(h) \\
&=& \sum_{e \in E} \chi(eg).
\end{eqnarray*}
Thus we have
\begin{eqnarray*}
MA_{E}^{t}\overline{M}^{t}(\chi, \psi) &=& \sum_{g \in G} \sum_{e \in E} \chi(eg) \overline{\psi(g)} \\
&=& \sum_{e \in E} \chi(e) \cdot \sum_{g \in G} \chi(g) \overline{\psi(g)} \\
&=& \begin{cases}
  |G| \sum_{e \in E} \chi(e), & \mbox{if $\chi =\psi$}\\
  0, &  \mbox{otherwise}
  \end{cases} 
\end{eqnarray*}
where the last equality follows from the orthogonality relations.  This proves equation (\ref{A}).  Again by the orthogonality relations we have that $\frac{1}{|G|}M \overline{M}^{t} = I$; from this the final statement follows.
\end{proof}

Thus finding the spectrum of $A_{E}$ is reduced to computing the character sums $\sum_{e \in E}\chi(e)$.  Generally speaking, the spectrum of an integer matrix has little 
 connection to its elementary divisors.  See \cite{rushanan-ev, sin} for a discussion in the context of the adjacency matrix, and \cite{lorenzini-snf} for what can be said about the Laplacian.  For abelian Cayley graphs the connection to the Smith normal form of $A_{E}$ is made by an important observation of Sin \cite[p. 1364]{sin}, which we paraphrase in the following two theorems.  

Observe that since $A_{E}$ is a zero-one matrix, we can view its entries as coming from any commutative ring. In what follows $K$ will denote an algebraic closure of the field of $p$-adic numbers, $\Qp$.  We let $\zeta \in K$ denote a primitive $|G|$-th root of unity.  The ring of $p$-adic integers is denoted $\Zp$ and we set $R = \Zp[\zeta]$.  Recall that a prime element $\pi \in R$ is said to \textit{lie over} the prime $p \in \Zp$ if $\pi R \cap \Zp = p\Zp$. 

\begin{theorem} \label{thm_sin_gen}
Let $p$ be a prime integer that does not divide $|G|$, let $i$ be a positive integer.  Let $\pi \in R$ be a prime lying over $p \in \Zp$.  Then the multiplicity of $p^{i}$ as an elementary divisor of $A_{E}$ is equal to the number of eigenvalues of $A_{E}$ exactly divisible by $\pi^{i}$ in $R$.
\end{theorem}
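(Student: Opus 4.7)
The plan is to exploit the conjugation formula of Theorem~\ref{thm-main} to produce, over the ring $R$, a unimodular equivalence between $A_{E}^{t}$ and the diagonal matrix $D$ of eigenvalues, to read off the $\pi$-adic invariant factors directly from $D$, and then to descend back to $\mathbb{Z}$.

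First I would check that under the hypothesis $p \nmid |G|$ the character table $M$ is unimodular over $R$. Its entries are $|G|$-th roots of unity, hence lie in $R$, and the orthogonality relation $\tfrac{1}{|G|} M \overline{M}^{t} = I$ exhibits $M^{-1} = \tfrac{1}{|G|}\overline{M}^{t}$ as an $R$-matrix, using that $|G|$ is a unit in $R$. Setting $P = \tfrac{1}{|G|} M$ and $Q = \overline{M}^{t}$, both of which are then unimodular over $R$, Theorem~\ref{thm-main} becomes $P A_{E}^{t} Q = D$. Hence $A_{E}^{t}$ and $D$ are equivalent over $R$ and share the same Smith normal form there.

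Second, because $p \nmid |G|$ the cyclotomic extension $\mathbb{Q}_{p}(\zeta)/\mathbb{Q}_{p}$ is unramified, so $R$ is a discrete valuation ring with maximal ideal $\pi R = pR$. The Smith form over this DVR of a \emph{diagonal} matrix is immediate: after sorting, the invariant factors of $D$ are the $\pi^{v_{\pi}(\lambda)}$ as $\lambda$ ranges over the eigenvalues of $A_{E}$. In particular, the multiplicity of $\pi^{i}$ among the $R$-invariant factors of $A_{E}$ equals the number of eigenvalues of $A_{E}$ exactly divisible by $\pi^{i}$.

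Finally, I would tie the $R$-invariant factors of $A_{E}$ back to its integer elementary divisors at $p$. The $p$-part of the Smith form is unaffected by passing from $\mathbb{Z}$ to $\mathbb{Z}_{p}$, and the base change $\mathbb{Z}_{p} \hookrightarrow R$, being an unramified extension of DVRs, preserves the Smith form term by term, replacing each $p^{a}$ by a unit multiple of $\pi^{a}$. Comparing with the previous paragraph identifies the multiplicity of $p^{i}$ as an elementary divisor of $A_{E}$ with the number of eigenvalues exactly divisible by $\pi^{i}$, as claimed. The main obstacle is this last bookkeeping step: one must confirm that extension of scalars along $\mathbb{Z}_{p}\hookrightarrow R$ faithfully tracks invariant factors, and the key input enabling it is precisely the unramifiedness of $R/\mathbb{Z}_{p}$, which holds exactly under the hypothesis $p\nmid|G|$.
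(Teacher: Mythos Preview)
Your proposal is correct and follows essentially the same route as the paper: use Theorem~\ref{thm-main} to obtain an $R$-equivalence between $A_E^{t}$ and the diagonal matrix of eigenvalues (using that $|G| \in R^{\times}$ so that $\tfrac{1}{|G|}M$ and $\overline{M}^{t}$ are $R$-unimodular), then invoke unramifiedness of $R/\mathbb{Z}_p$ to match the $\pi$-adic and $p$-adic elementary divisor multiplicities. The only detail you leave implicit that the paper makes explicit is why $\overline{M}$ has entries in $R$: one interprets $\overline{M}(\chi,g)=\chi(g^{-1})$, which is again a $|G|$-th root of unity.
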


\begin{proof}
Our preliminary facts and terminology about the Smith normal form carry over with very slight modification when one replaces the integers with any principal ideal domain.  It is clear that the multiplicity of $p^{i}$ as an elementary divisor of $A_{E}$ remains the same when one views the matrix entries as coming from the ring of $p$-adic integers $\Zp$.  Now let $\zeta$ be a primitive $|G|$-th root of unity in an algebraic closure $K$ of the field of $p$-adic numbers, $\Qp$, and consider the ring $R = \Zp[\zeta]$.  The prime $p$ is unramified in the extension $\Zp \subset R$ since $p \nmid |G|$, hence the multiplicity of $p^{i}$ as an elementary divisor  of $A_{E}$ over $\Zp$ is the same as the multiplicity of $\pi^{i}$ as an elementary divisor of $A_{E}$ over $R$ for any prime $\pi$ of $R$ lying over $p$.  Since $\overline{M}(\chi,g) = M(\chi,g^{-1})$, the matrices in Theorem \ref{thm-main} can be viewed as having entries from $R$, and since $\frac{1}{|G|}M \overline{M}^{t} = I$ we see that equation (\ref{A}) defines an equivalence of matrices over $R$.  The theorem follows.
\end{proof}

\begin{remarks}
\begin{enumerate}
\hfil
\item It is obvious that the conclusion of Theorem \ref{thm_sin_gen} remains true if we replace $A_{E}$ by any matrix diagonalized by $M$.  This includes linear combinations of the $A_{E}$ and the identity matrix; in particular, the Laplacian, signless Laplacian, Seidel adjacency matrix, etc.
\item See an example in Section \ref{non-integral} below of when $A_{E}$ has non-integer eigenvalues.  However, the most common situation we will encounter is when all of the eigenvalues of $A_{E}$ are integers.  In this case we have the following useful result.
\end{enumerate}
\end{remarks}

\begin{theorem} \label{thm_sin}
Let $p$ be a prime integer that does not divide $|G|$, let $i$ be a positive integer.  Suppose that the eigenvalues of $A_{E}$ are all integers.  Then the multiplicity of $p^i$ as an elementary divisor of $A_{E}$ is the same as the number of eigenvalues of $A_{E}$ exactly divisible by $p^{i}$.
\end{theorem}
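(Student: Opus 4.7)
The plan is to derive this as a quick corollary of Theorem \ref{thm_sin_gen} by matching $\pi$-adic exact divisibility in $R$ with $p$-adic exact divisibility in $\mathbb{Z}$, for integer-valued eigenvalues.

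First I would fix a prime $\pi \in R$ lying over $p$, as in Theorem \ref{thm_sin_gen}, so that the multiplicity of $p^{i}$ as an elementary divisor of $A_{E}$ equals the number of eigenvalues exactly divisible by $\pi^{i}$ in $R$. Under the hypothesis that every eigenvalue of $A_{E}$ is an ordinary integer, the task reduces to the arithmetic claim: for each $n \in \mathbb{Z}$, the power $\pi^{i}$ exactly divides $n$ in $R$ if and only if $p^{i}$ exactly divides $n$ in $\mathbb{Z}$.

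The key input is the same one that was already used in the proof of Theorem \ref{thm_sin_gen}: because $p \nmid |G|$, the cyclotomic extension $\mathbb{Z}_{p} \subset \mathbb{Z}_{p}[\zeta] = R$ is unramified, which is precisely the statement that the normalized valuation $v_{\pi}$ on $R$ satisfies $v_{\pi}(p) = 1$. Therefore $v_{\pi}$ restricts to $v_{p}$ on $\mathbb{Z}_{p}$, and in particular on $\mathbb{Z}$. Equivalently, $p$ and $\pi$ generate the same ideal in the completion at $\pi$ (up to a unit factor), so $\pi^{i}$ and $p^{i}$ divide the same integers and with the same exact powers. Applying this to the list of eigenvalues of $A_{E}$ converts the conclusion of Theorem \ref{thm_sin_gen} into the statement of Theorem \ref{thm_sin}.

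There is no serious obstacle: both ingredients (Theorem \ref{thm_sin_gen} and the unramifiedness of the cyclotomic extension when $p \nmid |G|$) are already in hand. The only care needed is bookkeeping around units and the distinction between $\mathbb{Z}$-, $\mathbb{Z}_{p}$-, and $R$-divisibility, and the unramifiedness resolves this uniformly.
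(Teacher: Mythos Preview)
Your proposal is correct and follows essentially the same route as the paper: both deduce the result from Theorem~\ref{thm_sin_gen} by using the unramifiedness of $p$ in $R$ (already noted in that proof) to identify $\pi$-adic and $p$-adic exact divisibility on integers. The paper states this more tersely, observing that an integer not divisible by $p$ is a unit in $\mathbb{Z}_{p}$ and hence in $R$, which together with the unramifiedness is exactly your valuation argument.
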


\begin{proof}
An integer not divisible by $p$ becomes a unit in $\Zp$, hence will also not be divisible by any $\pi \in R$ lying over $p \in \Zp$.  The result now follows from Theorem \ref{thm_sin_gen}.
\end{proof}

We now apply these theorems in conjunction to obtain strong results about elementary divisors for a variety of examples.
\section{Applications}
Since $G$ is a finite abelian group, it is isomorphic to a direct product of cyclic groups.  We will form interesting Cayley graphs by using a fixed cyclic decomposition of $G$ to define our connecting set.  One construction will be used so often that we define it now.  Under the identification
\[
G = Z_{q_{1}} \times Z_{q_{2}} \times \cdots \times Z_{q_{n}},
\]
where $Z_{q}$ denotes the (multiplicative) cyclic group of order $q$, define the connecting sets $E_{k}$, for $0 \le k \le n$:
\[
E_{k} \coloneqq \{ g \in G \, | \, \mbox{exactly } k \mbox{ components of } g \mbox{ are not equal to the identity}\}.
\]
When we are dealing with a Cayley graph defined by a connecting set $E_{k}$, we will write $A_{k}$ instead of $A_{E_{k}}$ for the adjacency matrix.  We denote by $[n]$ the set $\{1, 2, \ldots , n\}$ and we denote by $\binom{[n]}{k}$ the collection of subsets of $[n]$ of size $k$. 

We can provide a reasonable description of the eigenvalues $\sum_{e \in E_{k}} \chi(e)$ of $A_{k}$.  Let $\chi \in \irred(G)$.  Then, for $g = (g_{1}, g_{2}, \ldots , g_{n}) \in G$, 
$\chi$ takes the form
\[
\chi(g) = \chi_{1}(g_{1}) \chi_{2}(g_{2}) \cdots \chi_{n}(g_{n})
\]
for some $\chi_{i} \in \irred(Z_{q_{i}})$. 

We have that
\begin{equation} \label{B}
\begin{aligned}
\sum_{e \in E_{k}} \chi(e) &= \sum_{(e_{1}, e_{2}, \ldots , e_{n}) \in E_{k}} \chi_{1}(e_{1}) \chi_{2}(e_{2}) \cdots \chi_{n}(e_{n}) \\ 
&= \sum_{K \in \binom{[n]}{k}} \prod_{i \in K} \sum_{\substack{e_{i} \in Z_{q_{i}} \\ e_{i} \neq 1}} \chi_{i}(e_{i})
\end{aligned}
\end{equation}
and by considering the inner product of $\chi_{i}$ with the principal character of $Z_{q_{i}}$ we see that
\[
\sum_{\substack{e_{i} \in Z_{q_{i}} \\ e_{i} \neq 1}} \chi_{i}(e_{i}) = \begin{cases}
  q_{i} - 1,  & \mbox{if $\chi_{i}$ is principal}\\
  -1, &  \mbox{otherwise.}
  \end{cases} 
  \]
  \subsection{The Hamming association scheme.}
 
 Let $H(n,q)$ denote the Hamming association scheme; that is, $H(n,q)$ consists of $n$-tuples with coordinates coming from an alphabet of size $q$.  Two such tuples are $k$-th associates if they differ in exactly $k$ coordinate positions.  Setting $G = Z_{q} \times Z_{q} \times \cdots \times Z_{q}$ ($n$ times), we see that the distance $k$ association matrix of $H(n,q)$ is precisely $A_{k}$.  We remark again that our approach applies not just to the adjacency matrices but to any matrix in the Bose-Mesner algebra \cite[p. 9]{delsarte} of $H(n,q)$. 

 In this case the value of (\ref{B}) depends only on the number of $\chi_{i}$ that are principal.  Explicitly, if exactly $\ell$ of the $\chi_{i}$ are principal, (\ref{B}) collapses to express the eigenvalues in their usual form as integer values of the Krawtchouk polynomials \cite[p. 38]{delsarte}: 
 \begin{equation} \label{C}
 \sum_{(e_{1}, e_{2}, \cdots , e_{n}) \in E_{k}} \chi_{1}(e_{1}) \chi_{2}(e_{2}) \cdots \chi_{n}(e_{n}) = \sum_{j=0}^{k} \binom{\ell}{j} \binom{n - \ell}{k - j} (q-1)^{j} (-1)^{k - j}.
 \end{equation}
The number of $\chi \in \irred(G)$ consisting of exactly $\ell$ copies of the principal character of $Z_{q}$ is $\binom{n}{\ell} (q-1)^{n - \ell}$, and we can apply Theorem \ref{thm_sin} to (\ref{C}) to compute the $p$-elementary divisor multiplicities of $A_{k}$, for primes $p$ not dividing $q$.

It is often the case that many of the terms in (\ref{C}) are equal to zero.  In particular, consider the maximal distance association matrix $A_{n}$.  From (\ref{C}) we see that the eigenvalues of $A_{n}$ are $(q-1)^{\ell}(-1)^{n-\ell}$ occurring with multiplicity $\binom{n}{\ell} (q-1)^{n - \ell}$.  Since a prime that divides $q$ will not divide $q-1$, we see that all of the elementary divisors of $A_{n}$ can be obtained from the spectrum in this case; and, in fact, the invariant factors of $A_{n}$ are \textit{equal} to its eigenvalues.  This fact was conjectured in \cite{JMU} and first proved in \cite{sin}.

If we restrict ourselves to the binary Hamming scheme $H(n, 2)$, then the situation becomes simpler.  The association matrices $A_{k}$ and $A_{n-k}$ are the same up to row permutation, hence they share the same Smith normal form.  The distance $1$ matrix is then the adjacency matrix for the well-studied $n$-cube graph.

\subsection{The $n$-cube graph.} 

Here $G = Z_{2} \times Z_{2} \times \cdots \times Z_{2}$ ($n$ times), $E = E_{1}$, and $A = A_{1}$ is the adjacency matrix of the $n$-cube graph.  Thus Theorem \ref{thm_sin} applies to give us the $p$-elementary divisors of $A$ for odd primes $p$.  Here (\ref{C}) collapses to give us eigenvalues 
\begin{equation} \label{ncube_spec}
-n +2\ell
\end{equation}
occurring with multiplicity $\binom{n}{\ell}$, for $0 \leq \ell \leq n$.  These eigenvalues have come up in many applications \cite[Chap. 7]{vanlint}, \cite{stanley}. 
 We see that when $n$ is odd, all of the eigenvalues of $A$ are odd (and $A$ is nonsingular).  Thus $A$ has only odd elementary divisors and so the complete structure of the Smith group of the $n$-cube can be deduced from the eigenvalues in this case.  When $n$ is even, however, the $2$-primary component of the Smith group will be nontrivial.  We will return to the $2$-part of $A$ in our discussion of Laplacians below.
 
 \subsection{Cartesian products of complete graphs.}
 
 Generalizing the $n$-cube graph we now set $G = Z_{q_{1}} \times Z_{q_{2}} \times \cdots \times Z_{q_{n}}$ but continue to use connecting set $E = E_{1}$.  We again write $A = A_{1}$ for the adjacency matrix. Cayley graphs of this form are precisely the Cartesian products of complete graphs.
 
 To describe the eigenvalues corresponding to $\chi = (\chi_{1}, \chi_{2}, \cdots , \chi_{n}) \in \irred(G)$, we need to know not just how many of the $\chi_{i}$ are principal but their locations as well.  Say $\chi_{i_{1}}, \chi_{i_{2}}, \cdots, \chi_{i_{\ell}}$ are the principal ones.  Then (\ref{B}) simplifies to
 \begin{equation} \label{cart_spec}
 -n + \sum_{j = 1}^{\ell} q_{i_{j}}
 \end{equation}
 and there are $\prod_{i \notin \{i_{1}, i_{2}, \cdots , i_{\ell} \}} (q_{i}-1)$ characters $\chi \in \irred(G)$ of this form.  From this we can deduce the $p$-elementary divisors of $A$ for primes $p$ that divide none of the $q_{i}$, $0 \leq i \leq n$.
 
 \subsection{The Laplacian and the critical group.}

As the Cayley graph is regular with valency $|E|$, the adjacency spectrum determines the Laplacian spectrum. We now recover some powerful results on the critical group of some of the Cayley graphs above. 

\subsubsection{The critical group of the $n$-cube.}

Let $L$ denote the Laplacian matrix of the $n$-cube.  Then equation (\ref{ncube_spec}) implies that the eigenvalues of $L$ are $2\ell$ occurring with multiplicity $\binom{n}{\ell}$, for $0 \leq \ell \leq n$.  It follows that, for odd primes $p$, the Sylow  $p$-subgroup of the critical group of the $n$-cube is isomorphic to
\begin{equation} \label{bai_result}
\prod_{\ell=1}^{n} Syl_{p}(Z_{\ell})^{\binom{n}{\ell}}.
\end{equation}
This is the main result of \cite{bai}.  

In general, the full structure of the $2$-primary component of both the critical group and the Smith group of the $n$-cube remain unknown.  In \cite[Theorem 1.3]{bai},
 the $2$-rank of $L$ is shown to be equal to $2^{n-1}$, and the multiplicity of $2$ as an elementary divisor of $L$ is also determined.  As we mentioned earlier, for odd $n$ the $2$-rank of the adjacency matrix $A$ is $2^{n}$.  For even $n$, note that by definition of the Laplacian we have that $L \equiv A \pmod 2$.  Hence, for even $n$, the $2$-rank of $A$ is also $2^{n-1}$.  More generally, if $L \equiv A \pmod {2^{i}}$ then the multiplicity of $2^{j}$, $0 \leq j < i$, as an elementary divisor is the same for both $A$ and $L$ \cite[Lemma 3.3]{B-D-S}.  Computer calculations seem to indicate that the $2$-primary component of the Smith group may be easier to understand.  We conjecture that the multiplicity of $2^{i}$ as an elementary divisor of $A$ is equal to the number of eigenvalues of $A$ exactly divisible by $2^{i+1}$.

\subsubsection{The critical group of a Cartesian product of complete graphs.}

The previous result on the $p$-primary component of the critical group of the $n$-cube, for odd primes $p$, was generalized to the critical group of a Cartesian product of complete graphs \cite[Theorem 1.2]{J-N-R}.
  Recall that such a graph is the Cayley graph $\CC$ of $G = Z_{q_{1}} \times Z_{q_{2}} \times \cdots \times Z_{q_{n}}$ with connecting set $E = E_{1}$.  Let $A = A_{1}$ denote the adjacency matrix and let $L$ denote the Laplacian.  From equation (\ref{cart_spec}) we deduce that each subset $\{i_{1}, i_{2}, \cdots , i_{\ell}\}$ of $[n]$ determines $\prod_{i \notin \{i_{1}, i_{2}, \cdots , i_{\ell} \}} (q_{i}-1)$ eigenvalues of $L$ of the form
\[
n - \sum_{j=1}^{\ell} q_{i_{j}} + \sum_{i = 1}^{n}(q_{i} - 1) = \sum_{i \notin \{i_{1}, i_{2}, \cdots , i_{\ell}\}} q_{i}.
\]
It follows that, for a prime $p$ not dividing any of the $q_{i}$, $1 \leq i \leq n$, the Sylow $p$-subgroup of the critical group of $\CC$ is isomorphic to
\[
\prod_{\substack{S \subseteq [n]\\ S \neq [n]}} Syl_{p}(Z_{\sum_{i \notin S} q_{i}})^{\prod_{i \notin S}(q_{i}-1)}.
\]
This result was proved in \cite{J-N-R}.  It is worth mentioning that the proof in \cite{J-N-R} relies heavily on keeping track of integral row and column operations on $L$.  Our proof of (\ref{bai_result}) is also of a very different nature than the proof in \cite{bai}.

\subsection{Non-integer eigenvalues.} \label{non-integral}

It was observed in \cite[Example 4.1]{K-S} 
that if we take our connecting set $E$ to be any union of the $E_{i}$ then the eigenvalues of $A_{E}$ are all integers.  We conclude with a small non-integer example.

Let $G = Z_{7} = \langle  x \rangle$ and use connecting set $E = \{x^{4}, x^{5}, x^{6}\}$.  Following the notation preceding Theorem \ref{thm_sin_gen}, 
$\zeta \in K$ is a primitive $7$-th root of unity and set $\alpha = \zeta^{5} + \zeta^{2} + \zeta + 1$ and $\beta = \zeta^{5} + \zeta^{4} + \zeta^{3} + 1$.  Notice that in the ring $R = \mathbb{Z}_{2}[\zeta]$ we have $2 = \zeta^{2} \cdot \alpha \cdot \beta$.  The adjacency matrix $A_{E}$ has seven distinct eigenvalues:  
\[
3, \quad -\alpha, \quad -\zeta^{2}\alpha, \quad -\zeta^{3} \beta, \quad -\zeta^{6}\alpha, \quad -\beta, \quad -\zeta^{2}\beta.
\]
From Theorem \ref{thm_sin_gen} 
we deduce that $3$ is an elementary divisor of $A_{E}$ with multiplicity $1$ and $2$ is an elementary divisor of $A_{E}$ with multiplicity $3$.

\section{Acknowledgements}
This work was supported by the James Madison University Program of Grants for Faculty Assistance.  The authors also acknowledge helpful comments from Peter Sin, Dino Lorenzini, and the anonymous referee. 

\bibliographystyle{plain}
\bibliography{Bibliography}

\end{document}